\newtheorem{Theorem}{Theorem}[section]
\newtheorem{Proposition}[Theorem]{Proposition}
\newtheorem{Corollary}[Theorem]{Corollary}
\newtheorem{Lemma}[Theorem]{Lemma}
\theoremstyle{definition}
\newtheorem{Definition}[Theorem]{Definition}
\title{On the noncollapsedness of positively curved Type I ancient Ricci flows}
\author{Liang Cheng and Yongjia Zhang}
\numberwithin{equation}{section}
\begin{document}
\maketitle

\begin{abstract}
In this article, we study complete Type I ancient Ricci flows with positive sectional curvature. Our main results are as follows: in the complete and noncompact case, all such ancient solutions must be noncollapsed on all scales; in the closed case, if the dimension is even, then all such ancient solutions must be noncollapsed on all scales. This furthermore gives a complete classification for three-dimensional noncompact Type I ancient solutions without assuming the noncollapsing condition.
\end{abstract}

\section{Introduction to the Main Results}

The study of ancient solutions to the Ricci flow, ever since Hamilton had published his program \cite{Ha1}, has been an important topic in the field of Ricci flow. Ancient solutions are Ricci flows whose existing intervals extend to negative infinity. They are of great importance because they usually arise as blow-up limits at finite-time singularities of the Ricci flow, and to this kind of blow-up limits, a term not improper, ``singularity models'', is assigned.

Perelman \cite{P} proved that a Ricci flow on a closed manifold cannot become locally collapsed within finite time. Subsequently one may conclude that every singularity model must be $\kappa$-noncollapsed on all scales. This precisely means the following.

\begin{Definition}[$\kappa$-noncollapsing]
A Ricci flow $(M^n,g(t))$ is called $\kappa$-noncollapsed on all scales, where $\kappa$ is a positive number, if for any point $(x,t)$ in space-time and any positive scale $r$, it holds that $\displaystyle\operatorname{Vol}_{g(t)}\big(B_{g(t)}(x,r)\big)\geq\kappa r^n$ whenever $R\leq r^{-2}$ on $B_{g(t)}(x,r)$. Here $R$ stands for the scalar curvature
\end{Definition}

The noncollapsing notion defined above is sometimes called the \emph{strong} noncollapsing in the literature. The \emph{weak} noncollapsing notion is defined similarly with only the ``whenever $R\leq r^{-2}$ on $B_{g(t)}(x,r)$'' statement replaced by ``whenever $|Rm|\leq r^{-2}$ on $B_{g(t)}(x,r)\times [t-r^2,t]$''. It is known that for an ancient solution with bounded and nonnegative curvature operator, the weak noncollapsing is equivalent to the strong noncollapsing condition, with possibly a different $\kappa$. The noncollapsing condition which we use throughout this paper is the strong one. We remark that these two notions are not equivalent in general. For instance, a closed nonflat and Ricci-flat (static) Ricci flow is weakly noncollapsed but not strongly noncollapsed.

Since, according to Hamilton \cite{Ha1}, ancient solutions are critical to the understanding of the singularity formation in the Ricci flow (see, for instance, Perelman's proof of the canonical neighborhood theorem \cite{P}), it makes sense to assume the noncollapsing condition when studying ancient solutions. With this assumption, many groundbreaking works are done, and the most outstanding one is of Perelman \cite{P}. See also \cite{B1}, \cite{B2}, \cite{B}, and \cite{L}, etc., to list but a few.

It is well-known that not all ancient solutions are noncollpased. But what if some further conditions are added? Concerning this a question is proposed in \cite{CLN}:
\begin{quotation}
\noindent\textbf{Problem 9.41.} \emph{Are nonflat Type I ancient solutions with nonnegative curvature operator $\kappa$-solutions?} 
\end{quotation}
Recall that an ancient solution $(M,g(t))_{t\in(-\infty,w)}$ is called \emph{Type I} if
\begin{eqnarray*}
\limsup_{t\rightarrow-\infty}|t|\big|Rm_{g(t)}\big|<\infty.
\end{eqnarray*}
Without any further qualification, the answer to the above question is obviously ``no'', either in the closed case or in the complete and noncompact case. One may immediately think of $\mathbb{S}^{n-1}\times\mathbb{S}^1$ or $\mathbb{S}^{n-2}\times\mathbb{R}\times\mathbb{S}^1$ as counterexamples. There are also much more sophisticated counterexamples constructed. For instance, Fateev \cite{F} constructed an ancient solution on the Hopf fiber bundle, and Bakas-Kong-Ni \cite{BKN} generalized this construction to all odd-dimensional spheres; all these ancient solutions are Type I, collapsed, and with positive curvature operator.

In this article, we give a relatively satisfactory answer to Problem 9.41 in \cite{CLN} as quoted above. First of all, to rule out the possibility of a compact flat factor, with which the ancient solution is always collapsed, we would like to assume that the sectional curvature is strictly positive. This condition also largely simplifies the geometry in the complete and noncompact case, since the underlying manifold must be diffeomorphic to the Euclidean space by the Gromoll-Meyer theorem.

\begin{Theorem}  \label{MainTheorem1}
Let $(M^n,g(t))_{t\in(-\infty,w)}$, where $0<w\leq\infty$, be a complete and noncompact Type I ancient solution with positive sectional curvature. Then $(M^n,g(t))_{t\in(-\infty,0]}$ is $\kappa$-noncollapsed on all scales for some $\kappa>0$.
\end{Theorem}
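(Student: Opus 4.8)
The plan is to combine three ingredients: the Gromoll–Meyer theorem (so $M$ is diffeomorphic to $\mathbb{R}^n$, hence has no topology obstructing asymptotic volume), the Type I assumption to control the backward geometry, and Perelman's reduced-volume/reduced-length monotonicity based at a point far in the past. First I would exploit positive sectional curvature: by the Gromoll–Meyer theorem $M$ is diffeomorphic to $\mathbb{R}^n$, and the soul is a point; moreover, by the Bishop–Gromov comparison applied to a nonnegatively (indeed positively) Ricci-curved metric, each slice $g(t)$ has a well-defined asymptotic volume ratio $\mathcal{A}(t)=\lim_{r\to\infty}\operatorname{Vol}_{g(t)}(B_{g(t)}(x,r))/r^n$, which is independent of the basepoint $x$. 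The strategy is to show that, because of the Type I decay of curvature as $t\to-\infty$, this asymptotic volume ratio does not degenerate to zero, and then to transfer noncollapsing from infinity in space (where curvature decays by an argument using the Type I bound together with a Hamilton-type gradient estimate, or simply because $|Rm|\to 0$ along a sequence of points going to spatial infinity by the finiteness of $\int R$ in nonnegative-curvature ancient solutions) down to all scales via Perelman's no-local-collapsing argument.

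Concretely, the key steps I would carry out are: (1) Fix a time $t_0\le 0$ and a point $(x_0,t_0)$; we must bound $\operatorname{Vol}_{g(t_0)}(B_{g(t_0)}(x_0,r))$ from below whenever $R\le r^{-2}$ on that ball. (2) Run Perelman's reduced volume $\tilde V_{(x_0,t_0)}(\tau)$ and use its monotonicity in $\tau=t_0-t$; the goal is a lower bound $\tilde V_{(x_0,t_0)}(\tau)\ge c>0$ for some large $\tau$, from which Perelman's no-local-collapsing theorem yields the desired $\kappa$. (3) To get the lower bound on reduced volume as $\tau\to\infty$: use the Type I condition $|Rm_{g(t)}|\le C/|t|$ for $t\le -T$. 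This bounds the integrand in the $\mathcal{L}$-length from above — along a constant (in space) path one gets $\mathcal{L}\le C\sqrt{\tau}\log\tau$-type estimates, hence the reduced length $\ell(x_0,\tau)$ at the basepoint stays bounded, so $\tilde V(\tau)\ge c(n)e^{-\ell}>0$ does not collapse. Actually the cleanest route: by Type I, the rescaled flows $g_j(t)=\lambda_j^{-1}g(\lambda_j t)$ with $\lambda_j\to\infty$ subconverge (using Hamilton's compactness, with injectivity radius control coming from positive curvature plus the Gromoll–Meyer structure) to a nonflat Type I gradient shrinking soliton, which by Perelman is $\kappa$-noncollapsed; then pull this $\kappa$-noncollapsing back to the original flow at bounded scales using the reduced-volume monotonicity, which is scale-invariant.

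The main obstacle I expect is step (3) — specifically, obtaining a uniform lower bound on the reduced volume based at a point in the \emph{remote past}, or equivalently ruling out that the Type I blow-down limit is collapsed. The subtlety is that a priori the blow-down soliton could be a lower-dimensional or collapsed object (recall the $\mathbb{S}^{n-1}\times\mathbb{S}^1$ examples are collapsed Type I solutions!), so positivity of sectional curvature must be used decisively here: it forces $M\cong\mathbb{R}^n$, which prevents any compact collapsing factor, and it should force the blow-down soliton to be a genuine $n$-dimensional nonflat shrinker with bounded curvature. Establishing the requisite injectivity-radius or noncollapsing estimate \emph{along the blow-down sequence} — without already knowing the conclusion — is the crux; I would try to extract it from the nondegeneracy of the asymptotic volume ratio $\mathcal{A}(t)$ together with the fact that under the Ricci flow with nonnegative Ricci curvature $\mathcal{A}(t)$ is monotone, giving a scale-invariant volume lower bound that survives the blow-down. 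The remaining steps — Perelman's no-local-collapsing from a reduced-volume lower bound, and Hamilton's compactness — are standard.
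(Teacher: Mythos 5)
Your starting point coincides with the paper's key observation---positive sectional curvature plus the Type I bound gives, via Gromoll--Meyer, an injectivity radius bound $\operatorname{inj}(g(t))\geq c\sqrt{|t|}$, which is what allows a blow-down limit to exist at all---but the way you propose to rule out collapsing of that blow-down contains genuine gaps. First, the mechanism you identify as the crux, nondegeneracy of the asymptotic volume ratio $\mathcal{A}(t)$, is a dead end: complete noncompact ancient solutions with nonnegative curvature typically have $\mathcal{A}(t)=0$ (by Perelman, every noncompact $\kappa$-solution has zero asymptotic volume ratio), so positivity of $\mathcal{A}$ is neither available nor the right source of noncollapsing; noncollapsing here is a statement at Type I scales, not at infinite scales. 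Second, the step ``$\ell(x_0,\tau)$ bounded, hence $\tilde V(\tau)\geq c(n)e^{-\ell}>0$'' is not correct as written: the reduced volume is an integral over $M$ at time $t_0-\tau$, and a pointwise bound on $\ell$ at the base point gives nothing without a lower bound on $\operatorname{Vol}_{g(t_0-\tau)}$ of a ball of radius comparable to $\sqrt{\tau}$ where $\ell$ is controlled---which is exactly the collapsing one is trying to exclude. The missing input is precisely the Type I injectivity radius bound (which you mention only parenthetically, for Hamilton compactness): combined with the Type I curvature bound it yields a uniform volume bound $\operatorname{Vol}_{g(t)}\big(B_{g(t)}(x,\sqrt{|t|})\big)\geq c|t|^{n/2}$ at every point, and this, not $\mathcal{A}(t)$, is what makes the blow-down nondegenerate. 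Third, ``the blow-down soliton is $\kappa$-noncollapsed by Perelman'' is unjustified: Perelman's statement concerns asymptotic solitons of $\kappa$-solutions, where noncollapsedness is assumed from the outset, so invoking it here is circular; one needs a separate argument (the paper's route is to show that the entropies of the flow converge to the finite entropy $\mu_\infty$ of the asymptotic shrinker).

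There are two further points where your route falls short of the stated theorem even if the above is repaired. The reduced-volume no-local-collapsing theorem requires $|Rm|\leq r^{-2}$ on a backward parabolic neighborhood and hence delivers only the weak noncollapsing, whereas the theorem asserts the strong version (only $R\leq r^{-2}$ on the time-$t$ ball), and the two are not known to be equivalent under mere positive sectional curvature; the paper obtains the strong version because Bamler's theorem (Proposition \ref{noncollapsing}) converts a Nash entropy lower bound into a volume bound under exactly the scalar curvature hypothesis. Also, you work at a fixed base point $(x_0,t_0)$, but the theorem requires a single $\kappa$ for all $(x,t)$ with $t\leq 0$ and all scales; the paper gets this uniformity from the Ma--Zhang result (Proposition \ref{globalentropy}) that the Perelman and Nash entropies based at every point converge to the same $\mu_\infty$, so that $\mathcal{N}_{x,t}(r^2)\geq\mu_\infty$ holds universally. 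In short: your first ingredient is the right one, but the chain ``injectivity radius $\Rightarrow$ asymptotic shrinker with finite entropy $\Rightarrow$ uniform Nash entropy bound $\Rightarrow$ strong noncollapsing'' (or a correctly executed reduced-volume analogue with the volume bound at Type I scale inserted) is what actually closes the argument, and the asymptotic volume ratio plays no role.
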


The Type I, closed, and collapsed examples constructed in \cite{BKN} are only in odd dimensions, while the cases of even dimensions are yet open. The following theorem shows that there are no such collapsed examples in even dimensions.

\begin{Theorem}\label{MainTheorem2}
Let $(M^n,g(t))_{t\in(-\infty,w)}$, where $n$ is an even number and $0<w\leq\infty$, be a closed Type I ancient solution with positive sectional curvature. Then $(M^n,g(t))_{t\in(-\infty,0]}$ is $\kappa$-noncollapsed on all scales for some $\kappa>0$.
\end{Theorem}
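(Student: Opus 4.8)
The plan is to exploit the even-dimensionality together with positive sectional curvature to force an effective lower bound on the asymptotic volume-type quantity, and then to argue by a Type I rescaling/blowdown that this bound propagates to all scales. Here is the key idea in the closed even-dimensional case: by Synge's theorem, a closed even-dimensional manifold with positive sectional curvature is simply connected (and orientable). Thus, for each fixed $t$, $(M^n,g(t))$ is simply connected with $\operatorname{Ric}>0$. This rigidity should let us control the geometry uniformly: the diameter and volume are comparable to $R_{\max}^{-1/2}$ and $R_{\max}^{-n/2}$ respectively (via Bonnet–Myers on one side and, on the other side, a lower volume bound coming from the fact that a simply connected manifold cannot be too thin — this is where Type I is used to prevent degeneration as $t\to-\infty$).

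First I would set up the Type I rescaling. Suppose for contradiction that $(M^n,g(t))_{t\in(-\infty,0]}$ is collapsed at some sequence of scales $r_i$ at points $(x_i,t_i)$. Using the Type I curvature bound $|Rm_{g(t)}|\leq C/|t|$ and the fact that $w>0$, I would first reduce to the situation where the collapsing happens as $t_i\to-\infty$ (if collapsing occurred on a compact time interval with $r_i\to 0$, one extracts a smooth limit which is a flat collapsed limit, contradicting positive sectional curvature via a local argument; if $r_i\to\infty$ on a compact time interval one uses compactness of $M$ directly). So assume $t_i\to-\infty$ with $R\leq r_i^{-2}$ on $B_{g(t_i)}(x_i,r_i)$ but $\operatorname{Vol}_{g(t_i)}(B_{g(t_i)}(x_i,r_i))\leq\epsilon_i r_i^n$, $\epsilon_i\to 0$. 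Rescale: set $\tilde g_i(t)=|t_i|^{-1}g(|t_i|t)$, so the flows $\tilde g_i$ are Type I ancient solutions with a uniform curvature bound near $t=-1$, and $M$ closed. By Hamilton's compactness theorem (applied after checking a noncollapsing bound at the basepoint $(x_i,-1)$ — or, if such a bound fails, the whole manifold collapses and we compare total volumes), pass to a limit. The crucial point is that the Type I scaling keeps the intrinsic normalization $|t|\,|Rm|\leq C$, so the rescaled flows don't degenerate in curvature.

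The main obstacle, and the heart of the argument, is to rule out the collapsed limit. For a \emph{closed} flow this is cleaner than the noncompact case: the rescaled manifolds $(M,\tilde g_i(-1))$ all have the same underlying closed manifold $M$, which by Synge is simply connected. If they collapse (with bounded curvature), the limit is a collapsing-with-bounded-curvature situation à la Cheeger–Fukaya–Gromov, so $M$ would admit an F-structure / would collapse to a lower-dimensional space; but a simply connected closed manifold with a metric of $\operatorname{Ric}>0$ cannot collapse with bounded sectional curvature while keeping $\operatorname{Ric}>0$ — indeed one can argue via the fact that the normalized total volume $\operatorname{Vol}(M,\tilde g_i(-1))/\operatorname{diam}^n$ is bounded below on closed simply connected positively Ricci-curved manifolds of dimension $n$ by the work on the realization of such manifolds (or simply: Type I forces $R_{\max}(\tilde g_i(-1))$ to be bounded above and below by a positive constant, pinning diameter $\asymp 1$ by Bonnet–Myers, and then the collapsing of volume on $M$ is incompatible with the existence of a stable minimal-type sphere / the topological constraint that $M$ simply connected with bounded geometry has volume bounded below). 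The cleanest route is probably: combine the reduced-volume monotonicity of Perelman with the Type I assumption to show that the asymptotic reduced volume $\bar\nu:=\lim_{\tau\to\infty}\mathcal{V}(\tau)\in(0,1]$, use Type I to see $\bar\nu>0$, and then Perelman's argument (rigidity case of reduced-volume monotonicity) shows the associated tangent flow at $-\infty$ is a nonflat shrinking gradient Ricci soliton — which is $\kappa$-noncollapsed for $\kappa$ depending only on $\bar\nu$ — and then backward uniqueness / the Type I bound transfers noncollapsing back to $g(t)$ itself. The even-dimensional, simply connected hypothesis enters precisely to guarantee that the tangent soliton is not a collapsed product with a flat torus factor (the odd-dimensional counterexamples of \cite{BKN} collapse along Hopf circles — an $\mathbb{S}^1$ factor that is topologically ruled out here because $M$ is simply connected). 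I expect verifying that the soliton limit is genuinely noncollapsed — i.e., excluding a collapsed shrinking soliton as the tangent flow — to be the technical crux, handled by the Synge/simple-connectivity obstruction to an $\mathbb{S}^1$-collapse together with positivity of sectional curvature ruling out flat factors.
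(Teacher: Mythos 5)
There is a genuine gap at what you yourself identify as the crux: ruling out the collapsed limit. Your argument uses even-dimensionality only through Synge's theorem, i.e.\ through simple connectivity, and then claims that a simply connected closed manifold with $\operatorname{Ric}>0$ (or with positive sectional curvature and a Type I bound) cannot collapse with bounded curvature, and that the tangent soliton cannot be collapsed because an $\mathbb{S}^1$-collapse is ``topologically ruled out''. This is false: the Bakas--Kong--Ni examples \cite{BKN} live on odd-dimensional spheres, which are simply connected, have positive curvature operator, are Type I, and do collapse along the Hopf circles --- those circles are contractible, so no $\pi_1$ obstruction sees them (Berger spheres give the same phenomenon at the level of a single metric). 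For the same reason your ``cleanest route'' does not close: Type I alone does not imply that the asymptotic reduced volume is positive (the \cite{BKN} flows are Type I and collapsed, so their asymptotic reduced volume vanishes), so the step ``use Type I to see $\bar\nu>0$'' is precisely the point at issue, not a consequence of the hypotheses. The even-dimensional hypothesis has to enter through something quantitatively stronger than simple connectivity, namely Klingenberg's injectivity radius estimate (Proposition \ref{Klingenberg}): even-dimensional, closed, orientable, $0<\operatorname{sec}\leq K$ forces $\operatorname{inj}\geq \pi/\sqrt{K}$ (pass to the double cover if non-orientable). Combined with the Type I bound this gives $\operatorname{inj}(g(t))\geq c\sqrt{|t|}$ (Lemma \ref{inj-radius}), which is exactly what replaces the missing noncollapsing hypothesis in the blow-down compactness and yields a genuine (noncollapsed, nonflat) asymptotic shrinker with finite entropy (Proposition \ref{Asymptotic_Shrinker}). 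This is the paper's route, and without the injectivity radius input your blow-down argument cannot exclude the collapsed limit.

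A secondary, smaller gap: even granting a noncollapsed asymptotic shrinker based at one point, your final step (``backward uniqueness / the Type I bound transfers noncollapsing back to $g(t)$'') does not give $\kappa$-noncollapsedness at \emph{all} points and \emph{all} scales. One needs a mechanism to propagate the information from the blow-down limit to every space-time point and every scale; in the paper this is done through entropy: the Nash entropy based at any point converges, as $\tau\to\infty$, to the common entropy $\mu_\infty$ of the asymptotic shrinkers (Proposition \ref{globalentropy}, via \cite{MZ} and \cite{Ba}), hence $\mathcal{N}_{x,t}(\tau)\geq\mu_\infty$ by monotonicity, and Bamler's volume lower bound $\operatorname{Vol}_{g(t)}(B_{g(t)}(x,r))\geq c\,e^{\mathcal{N}_{x,t}(r^2)}r^n$ (Proposition \ref{noncollapsing}) converts this into noncollapsing with $\kappa=c\,e^{\mu_\infty}$. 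Some such quantitative bridge (entropy or reduced volume based at arbitrary points, together with its monotonicity) is needed; a qualitative statement about the tangent flow alone does not suffice.
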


The critical observations applied to the proofs of Theorem \ref{MainTheorem1} and Theorem \ref{MainTheorem2} are some injectivity radius estimates resulted from the Gromoll-Meyer theorem and the Klingenberg theorem. These classical theorems imply that an ancient solution as described in Theorem \ref{MainTheorem1} or Theorem \ref{MainTheorem2} has a Type I injectivity radius lower bound. This is sufficient to conclude the existence of an asymptotic shrinker, which in turn implies noncollapsedness; the authors used a similar idea in \cite{ChZ} to prove the noncollapsedness of a more general type of ancient Ricci flows---the locally uniformly Type I ancient solutions. Here we emphasize that the Type I injectivity radius lower bound itself does not directly imply the noncollapsedness; see section 2 for more details concerning this point.

An immediate application of Theorem \ref{MainTheorem1} is the following classification of three-dimensional Type I ancient solutions without assuming the noncollapsing condition. This is a generalization of \cite{Z1} (or \cite{H}). In consequence, all noncompact three-dimensional Type I ancient solutions must be noncollapsed.

\begin{Corollary} \label{3d}
A three-dimensional noncompact Type I ancient solution must be the standard cylinder $\mathbb{S}^2\times\mathbb{R}$, $(\mathbb{S}^2\times\mathbb{R})/\mathbb{Z}_2$, or $\operatorname{\mathbb{R}P}^2\times\mathbb{R}$. Hence it must also be $\kappa$-noncollapsed on all scales for some $\kappa>0$.
\end{Corollary}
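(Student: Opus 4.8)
The plan is to combine Theorem \ref{MainTheorem1} with the known classification of three-dimensional $\kappa$-noncollapsed Type I ancient solutions. The starting point is the following dichotomy for the underlying manifold $M^3$: either $(M^3,g(t))$ has positive sectional curvature at some (hence, by evolution equations and the strong maximum principle, every) time, or it does not. If the curvature is strictly positive and $M$ is noncompact, then Theorem \ref{MainTheorem1} applies directly and yields that the flow is $\kappa$-noncollapsed on all scales; I will then invoke Hamilton's and Perelman's classification of three-dimensional $\kappa$-noncollapsed ancient solutions together with Zhang's (or Hamilton's) result \cite{Z1} (\cite{H}) classifying Type I such solutions, to conclude that a noncompact one with positive curvature cannot exist — the only possibilities are the cylinder and its quotients, all of which have a flat direction, contradicting strict positivity. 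Hence this branch is vacuous, and one is forced into the second branch.

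So the core of the argument is the case where the sectional curvature of $(M^3,g(t))$ is \emph{not} strictly positive. By Hamilton–Ivey pinching (which holds automatically on any three-dimensional ancient solution), the curvature operator is nonnegative; if it is not strictly positive then at some point some sectional curvature vanishes, and the strong maximum principle for the curvature operator (Hamilton's splitting theorem) forces the universal cover to split off a line isometrically, $\widetilde M = N^2 \times \mathbb{R}$, where $(N^2,\tilde g(t))$ is a two-dimensional ancient solution with nonnegative (hence, being $2$-dimensional and ancient, eventually positive or flat) curvature. Since the flow is Type I, $N^2$ inherits a Type I bound. A two-dimensional Type I ancient solution with positive curvature that is nonflat and complete must be (by the classification of two-dimensional ancient solutions, e.g. Daskalopoulos–Hamilton–Sesum, or more elementarily the Type I restriction) the shrinking round $\mathbb{S}^2$; if it were flat the splitting $\mathbb{R}^2 \times \mathbb{R}$ would contradict noncompactness being essential or would just be $\mathbb{R}^3$, which is not of the listed form but also is excluded because a noncompact ancient solution that is flat is not usually counted, or one handles it by noting it is $\kappa$-noncollapsed trivially and is ruled out as not arising — I would state this carefully. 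Thus $\widetilde M = \mathbb{S}^2 \times \mathbb{R}$ with the standard shrinking cylinder metric, and $M$ is a quotient thereof; the only quotients that are complete noncompact Ricci flows are $\mathbb{S}^2\times\mathbb{R}$, $(\mathbb{S}^2\times\mathbb{R})/\mathbb{Z}_2$, and $\operatorname{\mathbb{R}P}^2\times\mathbb{R}$, which is the asserted list. Each of these is manifestly $\kappa$-noncollapsed on all scales (being a quotient of the round cylinder by a fixed finite group, the volume-ratio lower bound is uniform), giving the final sentence of the Corollary.

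I expect the main obstacle to be the bookkeeping in the non-strictly-positive branch: one must be sure that the strong maximum principle genuinely applies (which requires nonnegativity of the curvature operator — guaranteed by Hamilton–Ivey on ancient three-manifolds — and then Hamilton's advanced maximum principle for systems), and that the resulting $2$-dimensional factor is forced to be the round sphere rather than, say, a cigar soliton (the Type I hypothesis rules out the cigar, since the cigar's scalar curvature decays only like $1/|t|$ in a way that is not Type I in the relevant ancient sense — more precisely the cigar is an eternal solution that is \emph{not} Type I as an ancient solution because $|t||Rm|$ is unbounded as $t\to-\infty$; I would spell this out). A secondary but more routine point is checking which topological quotients of $\mathbb{S}^2\times\mathbb{R}$ actually carry a Ricci flow compatible with the product structure; this is classical and I would only cite it. The strictly positive branch, by contrast, is immediate once Theorem \ref{MainTheorem1} is in hand, so the novelty of the Corollary lies entirely in pushing the previously-known \emph{noncollapsed} classification down to the general case by using Theorem \ref{MainTheorem1} to dispose of the positively curved possibility.
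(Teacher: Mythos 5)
Your proposal follows essentially the same route as the paper: the same dichotomy on strict positivity of the sectional curvature, with the positively curved noncompact case killed by Theorem \ref{MainTheorem1} together with the nonexistence results of \cite{Z2} (or \cite{H}), and the remaining case handled by Hamilton's strong maximum principle splitting, forcing the universal cover to be the shrinking cylinder, followed by an analysis of its noncompact quotients. The only differences are cosmetic: the paper cites Chen \cite{C} (rather than Hamilton--Ivey) for nonnegativity of the sectional curvature, gives a short direct argument for which quotients of $\mathbb{S}^2\times\mathbb{R}$ are noncompact where you defer to a citation, and, like you, implicitly sets aside the flat case.
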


In \cite{CLN}, it is asked in Problem 9.40 whether a Type I ancient solution with positive curvature operator is closed. While we are not yet able to give an answer to this question, the following Corollary rules out one possibility of its asymptotic shrinker---the standard cylinder. This also means that if such an ancient solution did exist, then its geometry could not be very simple (though topologically it is the Euclidean space).

\begin{Corollary} \label{noncompact}
A Type I complete and noncompact ancient Ricci flow with nonnegative curvature operator and positive sectional curvature cannot have the standard cylinder $\mathbb{S}^{n-1}\times\mathbb{R}$ as its asymptotic shrinker.
\end{Corollary}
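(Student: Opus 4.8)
The plan is to combine Theorem~\ref{MainTheorem1} with the structure theory of $\kappa$-solutions and the Gromoll--Meyer description of $M$. First, by Theorem~\ref{MainTheorem1} the flow is $\kappa$-noncollapsed on all scales, and since it is Type~I with nonnegative curvature operator it has bounded curvature on every $(-\infty,t_0]$ (a standard consequence of the Type~I hypothesis together with completeness); hence $(M^n,g(t))$ is an ancient $\kappa$-solution. The Type~I injectivity radius lower bound established in the proof of Theorem~\ref{MainTheorem1}, together with Perelman's reduced distance and reduced volume monotonicity in the form used by Naber and by Enders--M\"uller--Topping, then produces a nonflat gradient shrinking Ricci soliton $(M_\infty,g_\infty(t),f_\infty)$ as the asymptotic shrinker, realized as a pointed smooth Cheeger--Gromov limit of rescaled flows $g_i(t):=\tau_i^{-1}g(\tau_i t)$, $\tau_i\to\infty$, based at points $q_i$ with $\ell(q_i,\tau_i)\le n/2$. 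Moreover, by the Gromoll--Meyer theorem, $(M,g(t))$ is diffeomorphic to $\mathbb{R}^n$ for every $t$, hence contractible, so $H_{n-1}(M;\mathbb{Z})=0$.

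Now suppose, toward a contradiction, that $(M_\infty,g_\infty)$ is the standard round cylinder $\mathbb{S}^{n-1}\times\mathbb{R}$. It carries a nontrivial degree-$(n-1)$ homology class represented by a totally geodesic cross-sectional sphere, so the smooth convergence $g_i(-1)\to g_\infty(-1)$ furnishes, for each large $L$ and all $i\ge i(L)$, an embedded hypersurface $\Sigma_i\subset M$ and a region of $(M,g(-\tau_i))$ which, rescaled by $\tau_i$, is $\varepsilon_i$-close (with $\varepsilon_i\to0$) to $\mathbb{S}^{n-1}\big(\sqrt{2(n-2)}\big)\times(-L,L)$ and has $\Sigma_i$ as its central sphere. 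Because $M\cong\mathbb{R}^n$, the sphere $\Sigma_i$ is null-homologous and therefore separates $M$ into a precompact ball-like region $\Omega_i$ and an unbounded complement; thus $(M,g(-\tau_i))$ contains a long, positively curved \emph{capped neck} $\Omega_i$, of neck radius $\sim\sqrt{2(n-2)\tau_i}$ and (rescaled) length $\to\infty$, with approximately totally geodesic boundary $\Sigma_i$.

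The contradiction is then obtained by running the Ricci flow forward from $t=-\tau_i$. Comparing the evolution of the neck with that of the shrinking round cylinder $g_\infty(t)$ (which becomes singular precisely at $t=0$) via the maximum principle applied to the sphere radius, or to the scalar curvature, along the neck, and using that the capped end can only accelerate rather than halt the collapse, one concludes that $g(t)$ must develop a singularity at some time $t_i^\ast\to0$; in particular $t_i^\ast<w$ for $i$ large, contradicting smoothness of the flow on all of $(-\infty,w)$. An alternative, more topological route: one shows that the distance function from a soul of $(M,g(-\tau_i))$ is regular with estimates uniform in $i$ (using the distance-distortion estimates coming from the Type~I curvature bound to keep the soul at bounded rescaled distance from $q_i$), so that the limit $M_\infty$ inherits a proper, critical-point-free distance function with a single minimum and is hence diffeomorphic to $\mathbb{R}^n$ --- again contradicting $M_\infty\cong\mathbb{S}^{n-1}\times\mathbb{R}$.

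The main obstacle is exactly the last step: one must quantify the comparison with the shrinking cylinder well enough to pin the singularity time strictly below $w$ --- equivalently, to exclude the \emph{a priori} scenario in which the capped end stabilizes the neck into a steady-soliton-like, Type~II configuration that never becomes singular. Making this rigorous calls either for the canonical-neighborhood persistence of long necks under Ricci flow combined with the Type~I bound, or, in the soul-based approach, for a genuine verification that the soul of $(M,g(-\tau_i))$ remains in the neck region (so that the regularity of the distance-from-soul function passes to the Cheeger--Gromov limit). One could also try to bypass both by invoking Hamilton's strong maximum principle to upgrade positive sectional curvature to positive curvature operator away from the Berger-exceptional holonomy cases and then arguing that $M_\infty$ inherits enough positivity to exclude the cylinder, but controlling the passage of positivity to the blow-down limit appears no easier.
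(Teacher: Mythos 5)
Your setup is fine and matches the first step of the paper's argument: Theorem~\ref{MainTheorem1} makes the flow $\kappa$-noncollapsed, hence (with the Type~I bound) an ancient $\kappa$-solution, and Gromoll--Meyer gives $M\cong\mathbb{R}^n$, so a cylindrical blow-down produces separating, null-homologous cross-sectional spheres and hence capped necks at very negative times. But the proof stops exactly where the real content begins, and you say so yourself: neither of your two proposed routes to the contradiction is viable as written. The ``maximum principle comparison with the shrinking cylinder'' is not a comparison the Ricci flow admits in any direct form --- a region that is $\varepsilon$-close to a cylinder at time $-\tau_i$ is not thereby forced to become singular near time $0$, because the cap can interact with the neck as the flow evolves; excluding precisely this scenario is the entire difficulty, and in the literature (Zhang \cite{Z2}, Hallgren \cite{H} in dimension three) it is handled not by a pointwise comparison but by the Kleiner--Lott neck stability theorem combined with a canonical neighborhood/compactness argument. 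Your alternative ``soul-based'' route is also unsound: the asymptotic shrinker is a pointed blow-down limit based at points chosen by the reduced distance, and there is no reason the soul (or cap region) stays at bounded rescaled distance from those base points --- if that could be established by soft distance-distortion estimates, the limit would trivially be $\mathbb{R}^n$ and the theorems of \cite{Z2,H} would be one-line corollaries, which they are not. The topology of $M$ simply does not pass to the blow-down limit; indeed the hypothetical solution being ruled out is diffeomorphic to $\mathbb{R}^n$ with cylindrical blow-down, and no contradiction arises at that level.

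The paper's proof supplies exactly the missing machinery: assuming the asymptotic shrinker is the cylinder, it invokes the results of section~3 of \cite{LZ} --- a canonical neighborhood theorem (so each time-slice has a non-neck-like region), splitting as $\mathbb{S}^{n-1}\times\mathbb{R}$ at spatial infinity, the higher-dimensional neck stability theorem of Kleiner--Lott type, and compactness of the space of such ancient solutions --- and then follows the three-dimensional nonexistence argument of \cite{Z2} line by line. So the gap in your proposal is not a technical detail to be quantified but the central mechanism of the proof: without neck stability (a point that is very neck-like at a very negative time remains neck-like going forward) and the accompanying point-picking/compactness argument, the presence of a cap at time $-\tau_i$ yields no contradiction with smoothness of the flow up to time $w$.
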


Ni \cite{Ni2} classified all closed Type I ancient and noncollapsed Ricci flows with nonnegative curvature operator. It is noted in \cite{BKN} that, because of the examples therein, the noncollapsing condition in the classification of \cite{Ni2} cannot be dropped. Nonetheless, because the examples in \cite{BKN} are only in odd dimensions, this conclusion is not decisive in even dimensions. Indeed, the noncollapsing condition can be dropped in even dimension, at least for the strictly positive curvature operator case.

\begin{Corollary} \label{compact}
An even-dimensional closed Type I ancient Ricci flow satisfying the strict $\operatorname{PIC}-2$ curvature condition (and in particular, with positive curvature operator) must be a standard shrinking round space form.
\end{Corollary}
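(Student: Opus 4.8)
The plan is to combine Theorem \ref{MainTheorem2} with the extraction and identification of an asymptotic shrinker. To begin, the strict $\operatorname{PIC}-2$ condition is, by definition, positive complex sectional curvature, which restricted to real $2$-planes is positive sectional curvature; and a positive curvature operator, through its Hermitian extension, has positive complex sectional curvature, so the parenthetical case of the statement is subsumed. Consequently Theorem \ref{MainTheorem2} applies and $(M^n,g(t))_{t\in(-\infty,0]}$ is $\kappa$-noncollapsed on all scales. A second consequence will be useful: strict $\operatorname{PIC}-2$ is preserved by the Ricci flow (Brendle--Schoen), and since $M$ is closed with positive scalar curvature, the flow starting from $g(0)$ becomes singular in finite time while the normalized flow converges to a metric of constant positive curvature; hence $M$ is diffeomorphic to a spherical space form $\mathbb{S}^n/\Gamma$ (and, $n$ being even, Synge's theorem forces $\Gamma$ to be trivial or of order two).

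Next I would extract an asymptotic shrinker. Since the flow is Type I and $\kappa$-noncollapsed, the parabolic rescalings $g_i(t)=|t_i|^{-1}g(|t_i|t)$ along a sequence $t_i\to-\infty$, with basepoints at points of almost-maximal curvature, subconverge in the pointed $C^\infty$ Cheeger--Gromov sense to a complete nonflat shrinking gradient Ricci soliton $(M_\infty^n,g_\infty(t),f_\infty)$. Nonnegative complex sectional curvature (weak $\operatorname{PIC}-2$) is scaling invariant, closed under limits, and preserved by Ricci flow, so $g_\infty$ inherits it, and $g_\infty$ has nonnegative sectional curvature. The crucial first step is to show that $M_\infty$ is \emph{compact}; I would do this following Ni \cite{Ni2}, using that a complete noncompact shrinking soliton with weak $\operatorname{PIC}-2$ splits off a Euclidean factor by the structure theory of such solitons, which is incompatible with $M_\infty$ arising as the asymptotic shrinker of a closed, positively curved, $\kappa$-noncollapsed Type I flow. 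Granting compactness, $M_\infty$ is diffeomorphic to $M$, hence to $\mathbb{S}^n/\Gamma$, and $g_\infty$ is a shrinking soliton metric on $\mathbb{S}^n/\Gamma$ with weak $\operatorname{PIC}-2$.

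It remains to identify $g_\infty$ as the round metric and to propagate this back to $g(t)$. For the first of these: under its own Ricci flow a gradient soliton moves only by diffeomorphisms and scaling, so the strong maximum principle for weak $\operatorname{PIC}-2$ forces $g_\infty$ either to have strictly positive complex sectional curvature or to have reduced holonomy; the latter is impossible on $\mathbb{S}^n/\Gamma$ for $n\ge 3$, since the universal cover $\mathbb{S}^n$ carries no nontrivial Riemannian product, no K\"ahler structure, and no non-round symmetric structure, so $g_\infty$ is strictly $\operatorname{PIC}-2$, whereupon Brendle--Schoen's convergence theorem applied to its self-similar flow (isometric up to scaling to $g_\infty$ at all times) forces $g_\infty$ to have constant positive curvature; the case $n=2$ is classical. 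For the second, I would run the rigidity argument of \cite{Ni2}: the Hamilton-type pinching estimates valid along $g(t)$---in the form furnished for $\operatorname{PIC}-2$ by the Brendle--Schoen machinery---control the deviation of $g(t)$ from constant curvature in terms of the scalar curvature and improve forward in time, so the roundness of the $t\to-\infty$ blow-down limit forces that deviation to vanish for every $t$. Therefore $(M^n,g(t))$ is the standard shrinking round space form.

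The principal obstacle is the two steps where the argument of Ni \cite{Ni2}, written for nonnegative curvature operator, must be run under the strictly weaker hypothesis of strict $\operatorname{PIC}-2$: (a) the compactness of the asymptotic shrinker, requiring the structure theory of complete noncompact shrinking solitons with weak $\operatorname{PIC}-2$ in place of nonnegative curvature operator; and (b) the pinching estimate driving the concluding rigidity, which must be the $\operatorname{PIC}-2$ version of Hamilton's pinching. The identification of the compact soliton in between is comparatively soft, once one uses that strict $\operatorname{PIC}-2$ already pins the topology down to a spherical space form.
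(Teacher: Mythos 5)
Your opening step is the same as the paper's: strict $\operatorname{PIC}-2$ implies positive sectional curvature (and is implied by positive curvature operator), so Theorem \ref{MainTheorem2} yields $\kappa$-noncollapsedness on all scales. At that point the paper is done modulo a citation: Corollary 0.4 of \cite{Ni2} already classifies closed, Type I, $\kappa$-noncollapsed ancient solutions under precisely this curvature hypothesis, so no further work is needed. You instead undertake to reprove that classification, and it is exactly there that your argument does not close.

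The two steps you yourself label ``the principal obstacle'' are genuine gaps, not routine adaptations. (a) Compactness of the asymptotic shrinker: you assert that a noncompact, weakly $\operatorname{PIC}-2$ shrinker splitting a Euclidean factor is ``incompatible'' with arising as the blow-down of a closed, positively curved, noncollapsed Type I flow, but you give no argument. The cylinder $\mathbb{S}^{n-1}\times\mathbb{R}$ is itself a $\kappa$-noncollapsed, Type I gradient shrinker with weak $\operatorname{PIC}-2$, and excluding it as an asymptotic shrinker is the hard point: strict positivity of the curvature of $g(t)$ is not inherited by the Cheeger--Gromov limit, so no soft openness/contradiction argument is available, and for comparison the paper's Corollary \ref{noncompact} needs the full machinery of \cite{LZ}, \cite{KL}, and \cite{Z2} to exclude the cylinder as an asymptotic shrinker even with additional hypotheses. (b) The concluding rigidity, ``pinching improves forward in time, so roundness of the $t\to-\infty$ limit forces roundness at every $t$,'' is not a proof as stated: to propagate you need either a scale-invariant, monotone pinching quantity that vanishes exactly at constant curvature, or an entropy-rigidity argument (monotone $\nu$ or Nash entropy trapped between backward and forward limits of equal value, forcing $g(t)$ to be a shrinking soliton); the Brendle--Schoen invariant cone families depend on the initial metric and do not by themselves furnish such a quantity on an ancient solution, and you set up neither alternative. (The Brendle--Schoen convergence theorem only pins down the diffeomorphism type $\mathbb{S}^n/\Gamma$, which says nothing about the metric $g(t)$ being round.) So, as written, the proposal is incomplete; either invoke Corollary 0.4 of \cite{Ni2} as the paper does, or actually carry out (a) and (b), which amounts to reproving Ni's theorem in the $\operatorname{PIC}-2$ setting.
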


Here we remark that, unlike \cite{Ni2}, we are not able to deal with the case when the curvature operator admits a zero eigenvalue, since in this case we can no longer prove the noncollapsedness. For instance, one may think of $\mathbb{S}^1\times\mathbb{S}^{2m+1}$, where the $\mathbb{S}^1$ factor is static and the $\mathbb{S}^{2m+1}$ factor is the standard shrinking sphere.
\\

This paper is organized as follows. In section 2 we use the Gromoll-Meyer theorem and the Klingenberg theorem to derive the Type I injectivity radius lower bound for the ancient solutions in question. In section 3 we review the fact that Perelman's entropy and the Nash entropy converge to the entropy of the asymptotic shrinker. In section 4 we show that the existence of asymptotic shrinker implies noncollapsedness. In section 5 we prove all the corollaries.

\emph{Acknowledgment.} The second author would like to thank Professor Jiaping Wang, Professor Lei Ni, and Professor Bennett Chow for many helpful discussions.

\section{The injectivity radius}

The injectivity radius estimates are provided by the following classical theorems of Gromoll-Meyer and Klingenberg. 
\begin{Proposition}[Gromoll-Meyer; c.f. Theorem 1.168 in \cite{CLN}]\label{Gromoll-Meyer}
Let $(M^n,g)$ be a complete and noncompact Riemannian manifold satisfying $0<\operatorname{sec}\leq K$, where $K$ is a positive number. Then the injectivity radius of $(M^n,g)$ satisfies
\begin{eqnarray*}
\operatorname{inj}(g)\geq\frac{\pi}{\sqrt{K}}.
\end{eqnarray*}
Moreover, $(M^n,g)$ is diffeomorphic to the standard Euclidean space.
\end{Proposition}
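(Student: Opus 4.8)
The plan is to establish the two assertions of the Proposition --- the injectivity radius bound $\operatorname{inj}(g)\ge\pi/\sqrt K$ and the identification of the diffeomorphism type --- separately. For the first, the natural tool is Klingenberg's lemma: at any point $p$ of a complete manifold, $\operatorname{inj}(p)=\min\{\operatorname{conj}(p),\tfrac12\ell(p)\}$, where $\operatorname{conj}(p)$ is the distance from $p$ to its first conjugate point along any geodesic and $\ell(p)$ is the length of the shortest geodesic loop based at $p$. Since $\operatorname{sec}\le K$, the Rauch comparison theorem (comparison with the round sphere of curvature $K$, whose conjugate points occur at distance $\pi/\sqrt K$) gives $\operatorname{conj}(p)\ge\pi/\sqrt K$ for every $p$. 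Thus it suffices to exclude geodesic loops of length $<2\pi/\sqrt K$; by the refined form of Klingenberg's lemma (the two minimizing segments from the midpoint of a shortest such loop back to the basepoint meet smoothly), this reduces to showing that $(M^n,g)$ admits \emph{no closed geodesic} of length $<2\pi/\sqrt K$.

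This last point is where noncompactness must be used, and it is the step I expect to be the main obstacle. I would argue by contradiction: suppose $\gamma$ is a closed geodesic of length $2a<2\pi/\sqrt K$ through a point $p$, and fix a ray $\rho\colon[0,\infty)\to M$ with $\rho(0)=p$. Since $a$ is strictly less than the conjugate radius everywhere, $\gamma$, regarded as a geodesic loop based at $p$, continues to a smooth family $\gamma_s$ of geodesic loops based at $\rho(s)$ for all $s\ge0$: the family cannot collapse to a point (an index-zero loop of positive length persists) and its length cannot escape to infinity, because the first-variation formula applied to the based-loop energy, together with the second variation of arclength along $\rho$, forces $s\mapsto L(\gamma_s)$ to be monotone and hence bounded by $2a$. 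Then each $\gamma_s$ is a nontrivial loop of bounded length contained in the ball $B(\rho(s),a)$ about a point running off to infinity; confronting this with a Cheeger--Gromoll totally convex exhaustion associated to $\rho$ (or, more hands-on, with the fact that $L(\gamma_s)$ cannot be stationary for all $s$) yields a contradiction. The difficulty is that positive sectional curvature is here only a pointwise hypothesis --- there is no uniform lower bound and hence no global Toponogov inequality --- so the comparison geometry must be carried out locally along the loop, with the global geometry entering solely through the existence of the ray.

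For the second part I would invoke the Cheeger--Gromoll soul theorem: a complete noncompact manifold with $\operatorname{sec}\ge0$ is diffeomorphic to the total space of the normal bundle of a compact totally geodesic submanifold, its soul. Strict positivity of the sectional curvature forces the soul to be a single point --- equivalently, following Gromoll--Meyer's original argument, for a suitably chosen base point $p$ the distance function $d_p$ has no critical point (in the sense of Grove--Shiohama) on $M\setminus\{p\}$, so that the isotopy lemma for distance functions identifies $M\setminus\{p\}$ with $S^{n-1}\times(0,\infty)$. In either formulation $M^n$ is diffeomorphic to the normal bundle of a point, i.e. to $\mathbb{R}^n$. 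Note that this part uses only noncompactness and $\operatorname{sec}>0$, whereas the injectivity radius bound genuinely needs the two-sided bound $0<\operatorname{sec}\le K$; the round projective space $\mathbb{R}P^n$, with $\operatorname{inj}=\tfrac{\pi}{2\sqrt K}$, shows that the bound fails without the noncompactness hypothesis.
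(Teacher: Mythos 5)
First, note that the paper does not actually prove this Proposition: it is quoted as a classical theorem with a pointer to Theorem 1.168 in \cite{CLN}, so the only fair comparison is with the standard classical argument. Your skeleton is the right one --- Rauch comparison gives conjugate radius $\geq \pi/\sqrt{K}$, Klingenberg's lemma reduces everything to ruling out short geodesic loops, and the Cheeger--Gromoll/Gromoll--Meyer soul picture gives the diffeomorphism statement --- and your second half is acceptable at the citation level (though ``strict positivity forces the soul to be a point'' is itself a nontrivial step of the same nature as the loop problem, not a throwaway).

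The genuine gap is in the injectivity radius half, at exactly the place you flag as the main obstacle. (a) The reduction ``it suffices to exclude closed geodesics of length $<2\pi/\sqrt{K}$'' is not immediate: the refined Klingenberg lemma upgrades a shortest geodesic loop to a smooth closed geodesic only at a point where the injectivity radius (equivalently the shortest-loop length) attains a local minimum, and on a noncompact manifold that infimum need not be attained --- points with short loops could escape to infinity. This attainment issue is precisely the noncompact difficulty, and the classical cure is Cheeger--Gromoll total convexity: a geodesic loop based at a point of a compact totally convex set $C$ is a geodesic segment with both endpoints in $C$ and hence lies in $C$, so one can minimize loop length over $C$ and run the basepoint-shortening first variation without leaving $C$. (b) Your proposed proof of the no-short-closed-geodesic step does not work as described: there is no justification for the claim that a closed geodesic through $p$ ``continues to a smooth family $\gamma_s$ of geodesic loops based at $\rho(s)$ for all $s\geq 0$'' --- an implicit-function-theorem continuation would require the absence of Jacobi fields vanishing at both endpoints, which is not guaranteed since the loop may have length up to $2\pi/\sqrt{K}$, i.e.\ longer than the conjugate radius bound $\pi/\sqrt{K}$ --- and the asserted monotonicity of $L(\gamma_s)$ from ``first variation of the based-loop energy plus second variation along $\rho$'' is not an argument (the first variation vanishes at a closed geodesic because it closes up smoothly, and has no sign in general afterwards). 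The fact you actually need, that a complete noncompact manifold with $\operatorname{sec}>0$ contains no closed geodesic at all, is proved by the convexity machinery you only gesture at: a closed geodesic $\sigma$ lies in a compact totally convex set $C$ of the exhaustion; $d(\cdot,\partial C)$ is concave on $C$, hence constant along the circle $\sigma$; iterating the Cheeger--Gromoll contraction pushes $\sigma$ into a soul, which must be a point when $\operatorname{sec}>0$ --- a contradiction. With (a) and (b) supplied, your outline becomes the standard proof; without them it is a plan, not a proof. (Your closing remark about $\mathbb{RP}^n$ with $\operatorname{inj}=\pi/(2\sqrt{K})$ is correct and a fair illustration of why noncompactness, or else the even-dimensional orientable hypothesis of Proposition \ref{Klingenberg}, is needed.)
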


\begin{Proposition}[Klingenberg; c.f.  Theorem 1.115 in \cite{CLN}]\label{Klingenberg}
Let $(M^n,g)$ be an even-dimensional closed orientable manifold satisfying $0<\operatorname{sec}\leq K$, where $K$ is a positive number. Then the injectivity radius of $(M^n,g)$ satisfies
\begin{eqnarray*}
\operatorname{inj}(g)\geq\frac{\pi}{\sqrt{K}}.
\end{eqnarray*}
\end{Proposition}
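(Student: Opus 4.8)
The plan is to argue by contradiction along the lines of Klingenberg's classical proof (the Ricci flow is not involved; a textbook treatment is in \cite{CLN}). Suppose $\operatorname{inj}(g)<\pi/\sqrt{K}$. First I would invoke the Rauch comparison theorem: the hypothesis $\operatorname{sec}\leq K$ forces the conjugate radius of $(M,g)$ to be at least $\pi/\sqrt{K}$, so no geodesic of length less than $\pi/\sqrt{K}$ carries conjugate points. Since $M$ is compact, the injectivity radius is realized at some point $p$, and by assumption $\operatorname{inj}(p)=\operatorname{inj}(g)$ lies strictly below the conjugate radius. Klingenberg's lemma on the structure of the cut locus then applies at $p$ — and, because the cut-locus relation is symmetric, also at the nearest cut point $q$ of $p$, which likewise realizes the injectivity radius — and shows that the two minimizing geodesics from $p$ to $q$ join up smoothly at both $p$ and $q$ into a smooth closed geodesic $\gamma$ of length $2\operatorname{inj}(g)<2\pi/\sqrt{K}$. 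Since every point of a closed geodesic has injectivity radius at most half its length, $\gamma$ is in fact a \emph{shortest} closed geodesic of $(M,g)$.

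The step that uses evenness of $n$ and orientability is Synge's second-variation trick. Parallel transport once around $\gamma$ is a linear isometry $P$ of $T_pM$ that fixes $\gamma'(0)$; because $M$ is orientable, $P$ preserves orientation, so it restricts to an element of $\operatorname{SO}(n-1)$ on the normal hyperplane $\gamma'(0)^{\perp}$. As $n-1$ is odd, $P$ must have $+1$ as an eigenvalue, and parallel transporting a unit eigenvector around $\gamma$ yields a smooth, parallel, unit normal field $V$ along the closed curve $\gamma$. The variation $\gamma_u(t):=\exp_{\gamma(t)}\big(uV(t)\big)$ deforms $\gamma$ through smooth closed curves, and since $\nabla_{\gamma'}V\equiv 0$ while $\operatorname{sec}(\gamma',V)>0$ everywhere, the first variation of length vanishes and the second satisfies
\[
\frac{d^{2}}{du^{2}}\Big|_{u=0}L(\gamma_u)=-\int_{\gamma}\operatorname{sec}\big(\gamma',V\big)\,ds<0 .
\]
Therefore $L(\gamma_u)<L(\gamma)$ for all sufficiently small $u\neq 0$: the shortest closed geodesic $\gamma$ admits a length-decreasing deformation within its free homotopy class.

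Finally I would convert this into a contradiction, and this is the step I expect to be the main obstacle. Applying the very same second-variation computation to loops gives Synge's theorem, so $M$ is simply connected; hence $\gamma$ is null-homotopic and the contradiction cannot be obtained merely by minimizing length in its free homotopy class. Instead one combines the negative second variation above with Klingenberg's long homotopy lemma — which, roughly, forbids homotoping between two distinct geodesics whose lengths lie below the conjugate radius (here the two halves of $\gamma$, each of length $\operatorname{inj}(g)<\pi/\sqrt{K}$) through curves that all stay short — together with a Birkhoff-type curve-shortening argument. Reconciling the parallel-field deformation with the length lower bound supplied by the long homotopy lemma produces the contradiction with $\operatorname{inj}(g)<\pi/\sqrt{K}$; this bookkeeping is the only delicate point, and it is carried out in full in \cite{CLN}. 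We conclude $\operatorname{inj}(g)\geq\pi/\sqrt{K}$.
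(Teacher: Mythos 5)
The paper does not actually prove this statement---it is quoted as a classical theorem of Klingenberg with a pointer to Theorem 1.115 in \cite{CLN}---so there is no in-paper argument to compare against; what you have written is a sketch of the standard textbook proof, and its ingredients are correctly assembled: Rauch comparison gives conjugate radius at least $\pi/\sqrt{K}$; Klingenberg's cut-locus lemma at a point realizing the (compactness-attained) injectivity radius produces a smooth closed geodesic $\gamma$ of length $2\operatorname{inj}(g)$, which is shortest among closed geodesics; and Synge's parallel-transport trick (orientability plus the odd dimension $n-1$ of $\gamma'(0)^{\perp}$ forcing a $+1$ eigenvalue of the holonomy) yields a parallel unit normal field along $\gamma$ with strictly negative second variation of length.

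As a self-contained proof, however, your write-up stops exactly where the real content of the theorem lies. Since, as you yourself observe, Synge's theorem makes $M$ simply connected, the length-decreasing deformation of $\gamma$ contradicts nothing by itself; the contradiction comes from converting the free homotopy $\gamma_u$ into an endpoint-fixed homotopy between the two halves of $\gamma$ (using the unique minimizing geodesics between suitable pairs of points on the short curves $\gamma_u$) and then invoking Klingenberg's long homotopy lemma, whose proof lifts the homotopy through $\exp_p$, an immersion on the ball of radius $\pi/\sqrt{K}$. You assert that this reconciliation ``produces the contradiction'' and defer the bookkeeping to \cite{CLN}; that bookkeeping is precisely the heart of the theorem, so what you have is an outline rather than a proof. (The appeal to a ``Birkhoff-type curve-shortening argument'' is also unnecessary---the standard argument does not use it.) Since the paper itself treats this proposition as a citation, quoting \cite{CLN} as the authors do is a legitimate treatment; but if your intention is to supply a proof, the long-homotopy-lemma step must be carried out explicitly.
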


The results above imply that the ancient solutions in question have Type I injectivity radii lower bounds.

\begin{Lemma}\label{inj-radius}
Let $(M^n,g(t))_{t\in(-\infty,w)}$ be an ancient solution as described in either Theorem \ref{MainTheorem1} or Theorem \ref{MainTheorem2}. Then there exists a constant $c>0$, such that the injectivity radius of $g(t)$ satisfies
\begin{eqnarray*}
\operatorname{inj}(g(t))\geq c\sqrt{|t|},
\end{eqnarray*}
for all $t\in(-\infty,0)$.
\end{Lemma}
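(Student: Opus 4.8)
The plan is to combine the Type~I decay of the curvature near $t=-\infty$ with the classical injectivity radius estimates of Proposition~\ref{Gromoll-Meyer} and Proposition~\ref{Klingenberg}, and then to handle the remaining compact range of times by a routine compactness argument. The point is simply that the Type~I hypothesis upgrades, for $t$ sufficiently negative, to a pointwise bound $0<\operatorname{sec}_{g(t)}\le C_0/|t|$ for a fixed constant $C_0$, and the classical theorems convert such an upper sectional curvature bound directly into the lower bound $\operatorname{inj}(g(t))\ge \pi\sqrt{|t|}/\sqrt{C_0}$.

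In detail, fix any $C_0>\limsup_{t\to-\infty}|t|\,|Rm_{g(t)}|$. By definition of the $\limsup$ there is some $T<0$ such that $|Rm_{g(t)}|\le C_0/|t|$, hence $0<\operatorname{sec}_{g(t)}\le C_0/|t|$, for all $t\le T$. Each time slice is complete with positive sectional curvature, so for $t\le T$ we may apply Proposition~\ref{Gromoll-Meyer} (in the setting of Theorem~\ref{MainTheorem1}, where $M$ is noncompact) or Proposition~\ref{Klingenberg} (in the setting of Theorem~\ref{MainTheorem2}, where $M$ is closed and even-dimensional, possibly after passing to the orientable double cover $\hat M$) with $K=C_0/|t|$, to obtain
\begin{equation*}
\operatorname{inj}(g(t))\ \ge\ \frac{\pi}{\sqrt{C_0/|t|}}\ =\ \frac{\pi}{\sqrt{C_0}}\,\sqrt{|t|},\qquad t\le T.
\end{equation*}
If in the closed case $M$ is non-orientable, the cover $\hat M$ is again closed, even-dimensional, and positively curved, so Proposition~\ref{Klingenberg} applies to each slice $(\hat M,\hat g(t))$, and the bound for $g(t)$ itself follows with the loss of at most a factor $\tfrac12$ in the constant, since the injectivity radius can drop by at most that factor under a Riemannian double covering (as for $\mathbb{S}^n\to\mathbb{RP}^n$).

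For $t\in[T,0)$ we use that $[T,0]\subset(-\infty,w)$ because $w>0$; on this compact time interval the curvature is bounded---automatically when $M$ is closed, and by the standing convention that the Type~I ancient solutions under consideration have bounded curvature on compact subintervals of their existence time when $M$ is noncompact---say $\sup_M|Rm_{g(t)}|\le C_1$ for $t\in[T,0]$. Applying Proposition~\ref{Gromoll-Meyer} or Proposition~\ref{Klingenberg} again with $K=C_1$ gives $\operatorname{inj}(g(t))\ge \pi/\sqrt{C_1}$ there, while $\sqrt{|t|}\le\sqrt{|T|}$ on this interval, so $\operatorname{inj}(g(t))\ge (\pi/\sqrt{C_1|T|})\sqrt{|t|}$. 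Taking
\begin{equation*}
c\ :=\ \tfrac12\,\min\Bigl\{\tfrac{\pi}{\sqrt{C_0}},\ \tfrac{\pi}{\sqrt{C_1\,|T|}}\Bigr\}\ >\ 0,
\end{equation*}
where the factor $\tfrac12$ absorbs the possible loss under the double covering in the non-orientable case, yields $\operatorname{inj}(g(t))\ge c\sqrt{|t|}$ for all $t\in(-\infty,0)$, as claimed.

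I do not expect a genuine obstacle here: once Propositions~\ref{Gromoll-Meyer} and~\ref{Klingenberg} are in hand the argument is essentially bookkeeping. The only points requiring attention are that those theorems need an upper sectional curvature bound on each slice---supplied by the Type~I condition near $-\infty$ and by compactness of $[T,0]$ otherwise---and that Proposition~\ref{Klingenberg} requires orientability, which is dispatched by passing to the orientable double cover. The real content of the lemma is the recognition that positively curved Type~I ancient flows fall within the scope of these classical estimates; everything else is routine.
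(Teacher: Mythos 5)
Your proposal is correct and follows essentially the same route as the paper: the Type~I bound together with positive sectional curvature gives $0<\operatorname{sec}_{g(t)}\le C/|t|$, and Propositions~\ref{Gromoll-Meyer} and~\ref{Klingenberg} (passing to the orientable double cover in the non-orientable closed case) then yield the Type~I injectivity radius bound. Your extra bookkeeping---splitting off the compact time interval $[T,0)$ where the $\limsup$ does not directly apply, and quantifying the factor-$\tfrac12$ loss under the double cover---only makes explicit what the paper's proof asserts in one line.
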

\begin{proof}
By the Type I and the positive sectional curvature conditions, we have that there exists $C>0$, such that
\begin{eqnarray*}
0<\operatorname{sec}(g(t))\leq\frac{C}{|t|},
\end{eqnarray*}
for all $t\in(-\infty,0)$. The lemma then follows from Proposition \ref{Gromoll-Meyer} and Proposition \ref{Klingenberg}. Note that though Proposition \ref{Klingenberg} requires orientability, which is not assumed in the statement of Theorem \ref{MainTheorem2}, yet one may always consider the double cover if necessary.
\end{proof}

Here we remark again that this Type I injectivity radius lower bound does not imply $\kappa$-noncollapsedness on all scales, since at some point on the manifold, the curvature could decay faster than Type I. Nevertheless, this estimate is sufficient for the existence of an asymptotic shrinker, which in turn implies $\kappa$-noncollapsedness on all scales.
\\

\section{The Asmptotic Shrinker}

Perelman \cite{P} and Naber \cite{Na} proved the existence of the asymptotic shrinker for ancient solutions under the assumptions of nonnegative curvature operator and of Type I curvature bound, respectively. They both also assumed the noncollapsing condition. However, it turns out that the only place where they applied this condition was to obtain an injectivity radius lower bound for a blow-down sequence, and they need this injectivity radius lower bound only at base points to conclude the convergence. This, of course, can be covered by Lemma \ref{inj-radius}. A more important fact is that Perelman's entropy and the Nash entropy on the ancient solution converge to the entropy of the asymptotic shrinker; see Proposition \ref{Asymptotic_Shrinker} below. Let us first review these notions of entropy.

Let $(M,g(t))_{t\in(-\infty,w)}$ be an ancient solution, where $0<w\leq\infty$. Let $(x_0,t_0)\in M\times(-\infty,w)$ be a fixed point in space-time and $u: M\times (-\infty,t_0)\rightarrow\mathbb{R}_+$ the fundamental solution to the conjugate heat equation $\displaystyle -\partial_t u-\Delta u+Ru=0$ based at $(x_0,t_0)$. We write $u$ as
\begin{eqnarray*}
u:=(4\pi\tau)^{-\frac{n}{2}}e^{-f},
\end{eqnarray*}
where $\tau=t_0-t\in(0,\infty)$. Then Perelman's entropy and the Nash entropy based at $(x_0,t_0)$ are respectively defined as
\begin{eqnarray*}
\mathcal{W}_{x_0,t_0}(\tau)&:=&\int_M\Big(\tau\big(|\nabla f|^2+R\big)+f-n\Big)udg_t,
\\
\mathcal{N}_{x_0,t_0}(\tau)&:=&\int_M fudg_t-\frac{n}{2}.
\end{eqnarray*}
It is a well known fact that both $\mathcal{W}_{x_0,t_0}(\tau)$ and $\mathcal{N}_{x_0,t_0}(\tau)$ are increasing in $t$ (or decreasing in $\tau$). Furthermore, we have
\begin{eqnarray*}
\lim_{\tau\rightarrow 0+}\mathcal{W}_{x_0,t_0}(\tau)=\lim_{\tau\rightarrow 0+}\mathcal{N}_{x_0,t_0}(\tau)=0.
\end{eqnarray*}
The following result is already well-established in literature.

\begin{Proposition}\label{Asymptotic_Shrinker}
Let $(M,g(t))_{t\in(-\infty,w)}$, where $0<w\leq\infty$, be an ancient solution as described in Theorem \ref{MainTheorem1} or Theorem \ref{MainTheorem2}. Let $(x_0,t_0)\in M\times(-\infty,w)$ be an arbitrarily fixed point and $\displaystyle u:=(4\pi\tau)^{-\frac{n}{2}}e^{-f}$  the conjugate heat kernel based at $(x_0,t_0)$. Let $\tau_i\nearrow\infty$ be an increasing sequence of positive numbers. Then, the following sequence of tuples
\begin{eqnarray*}
\Big\{\big(M,g_i(t),(x_0,-1),f_i\big)_{t\in(-\infty,0)}\Big\}_{i=1}^\infty
\end{eqnarray*}
converges, possibly after passing to a subsequence, to the canonical form of a shrinking gradient Ricci soliton, called the \emph{asymptotic shrinker}
\begin{eqnarray*}
\big(M_\infty,g_\infty(t),(x_\infty,-1),f_\infty\big)_{t\in(-\infty,0)},
\end{eqnarray*}
where $f_\infty$ is the potential function, satisfying 
\begin{eqnarray*}
Ric_{g_\infty}+\nabla^2f_\infty=\frac{1}{-2t}g_\infty.
\end{eqnarray*}
The convergence $g_i\rightarrow g_\infty$ is in the pointed Cheeger-Gromov-Hamilton \cite{Ha3} sense, and the convergence $f_i\rightarrow f_\infty$ is in the locally smooth sense. Here $g_i$ and $f_i$ are obtained by time-shifting and parabolic scaling as follows
\begin{eqnarray*}
g_i(t)&:=&\tau_i^{-1}g(\tau_it+t_0),
\\
f_i(\cdot,t)&:=&f(\cdot,\tau_it+t_0).
\end{eqnarray*}
Furthermore, Perelman's entropy and the Nash entropy converge to the entropy of the asymptotic shrinker. By this we mean
\begin{eqnarray}\label{eq1}
\int_M(4\pi|t|)^{-\frac{n}{2}}e^{-f_\infty}dg_\infty=1,
\\\nonumber
\lim_{\tau\rightarrow \infty}\mathcal{W}_{x_0,t_0}(\tau)=\lim_{\tau\rightarrow \infty}\mathcal{N}_{x_0,t_0}(\tau)=\mu_\infty,
\end{eqnarray}
where
\begin{eqnarray}\label{eq2}
\mu_\infty&=&\int_M \Big(|t|(|\nabla f_\infty|^2+R_{g_\infty})+f_\infty-n\Big)(4\pi|t|)^{-\frac{n}{2}}e^{-f_\infty}dg_\infty
\\\nonumber
&=&\int_M f_\infty(4\pi|t|)^{-\frac{n}{2}}e^{-f_\infty}dg_\infty-\frac{n}{2}
\end{eqnarray}
is a negative constant independent of time $t$, which we call \emph{the entropy of the asymptotic shrinker}.
\end{Proposition}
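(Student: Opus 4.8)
The plan is to run the by-now-standard blow-down argument of Perelman \cite{P} and Naber \cite{Na}, with the injectivity radius estimate of Lemma~\ref{inj-radius} taking the place of the $\kappa$-noncollapsing hypothesis that they used, and then to track Perelman's entropy and the Nash entropy along the blow-down. Concretely, I would rescale by setting $g_i(t):=\tau_i^{-1}g(\tau_it+t_0)$ and, writing $u=(4\pi\tau)^{-\frac{n}{2}}e^{-f}$ with $\tau=t_0-t$, setting $u_i(\cdot,t):=u(\cdot,\tau_it+t_0)=(4\pi|t|)^{-\frac{n}{2}}e^{-f_i}$, so that $u_i$ solves the conjugate heat equation of $g_i$. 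The Type I hypothesis gives, on every compact subset of $M\times(-\infty,0)$, a curvature bound $|Rm_{g_i}|\leq C$ uniform in $i$ once $i$ is large (for fixed $t<0$ one has $\tau_it+t_0\to-\infty$), and then Shi's estimates bound all covariant derivatives of $Rm_{g_i}$, again uniformly on compact subsets; and Lemma~\ref{inj-radius}, under the same parabolic rescaling, yields $\operatorname{inj}(g_i(-1))\geq c'>0$ for all large $i$. Hamilton's compactness theorem \cite{Ha3} then produces, along a subsequence, a complete pointed ancient solution $(M_\infty,g_\infty(t),(x_\infty,-1))_{t\in(-\infty,0)}$ with $g_i\to g_\infty$ in the pointed Cheeger--Gromov--Hamilton sense and $|Rm_{g_\infty(t)}|\leq C/|t|$.

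Next I would pass the conjugate heat kernels to the limit. Under the uniform bounded geometry of the $g_i$ the kernels $u_i$ satisfy uniform Gaussian upper and lower bounds, so that no mass escapes to spatial infinity; hence along a further subsequence $u_i\to u_\infty$ and $f_i\to f_\infty:=-\log\big((4\pi|t|)^{\frac{n}{2}}u_\infty\big)$ in $C^\infty_{\mathrm{loc}}$, where $u_\infty>0$ solves the conjugate heat equation on $(M_\infty,g_\infty(t))$ with $\int_{M_\infty}u_\infty\,dg_\infty(t)=1$, which is \eqref{eq1}, and $f_\infty$ is bounded below and of at most quadratic growth. These same bounds make the integrands in the definitions of $\mathcal{W}$ and $\mathcal{N}$ uniformly integrable, so for each fixed $\sigma>0$ the entropies of $g_i$ at $(x_0,-1)$ with parameter $\sigma$ converge to the corresponding quantities $\mathcal{W}^{g_\infty}(\sigma)$ and $\mathcal{N}^{g_\infty}(\sigma)$ of the limit (defined by the same integrals with $(g_\infty,f_\infty,u_\infty)$), which are in particular finite. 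On the other hand, by scale invariance this entropy of $g_i$ equals $\mathcal{W}_{x_0,t_0}(\tau_i\sigma)$, and since $\mathcal{W}_{x_0,t_0}(\cdot)$ is monotone the limit $\lim_{\tau\to\infty}\mathcal{W}_{x_0,t_0}(\tau)$ exists; comparing the two, this limit is finite, equals $\mathcal{W}^{g_\infty}(\sigma)$ for \emph{every} $\sigma>0$, and I name it $\mu_\infty$. The same argument, or the identity $\tau\mathcal{N}_{x_0,t_0}(\tau)=\int_0^\tau\mathcal{W}_{x_0,t_0}(s)\,ds$ together with the convergence of Ces\`aro averages, gives $\lim_{\tau\to\infty}\mathcal{N}_{x_0,t_0}(\tau)=\mu_\infty$.

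It then remains to identify the limit. Because $\sigma\mapsto\mathcal{W}^{g_\infty}(\sigma)\equiv\mu_\infty$ is constant, the Perelman monotonicity formula forces its defect $2\sigma\int_{M_\infty}\big|Ric_{g_\infty}+\nabla^2f_\infty-\tfrac{1}{2\sigma}g_\infty\big|^2u_\infty\,dg_\infty$ to vanish for every $\sigma$, and since $u_\infty>0$ this yields $Ric_{g_\infty}+\nabla^2f_\infty=\frac{1}{-2t}g_\infty$; thus $(M_\infty,g_\infty(t),f_\infty)$ is the canonical form of a gradient shrinking soliton, the asymptotic shrinker. For the sign, note that $\mathcal{W}_{x_0,t_0}(\tau)\leq0$ for all $\tau>0$ (it tends to $0$ as $\tau\to0^+$ and is decreasing), so $\mu_\infty\leq\mathcal{W}_{x_0,t_0}(\tau_1)\leq0$, and $\mu_\infty\neq0$, since otherwise $\mathcal{W}_{x_0,t_0}\equiv0$ on $(0,\infty)$ and the vanishing of the monotonicity defect would make $(M,g(t))$ the flat Gaussian soliton, contradicting positive sectional curvature. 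Finally the two expressions for $\mu_\infty$ in \eqref{eq2}, and its independence of $t$, follow by substituting the soliton identity into the definitions of $\mathcal{W}^{g_\infty}$ and $\mathcal{N}^{g_\infty}$ and using the self-similarity of the soliton.

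The step I expect to be the main obstacle is the passage of the conjugate heat kernels to the limit together with the convergence of the entropies: one must establish the uniform Gaussian bounds for the rescaled kernels $u_i$, deduce that no mass is lost in the limit (hence \eqref{eq1}) and that $f_i\to f_\infty$ in $C^\infty_{\mathrm{loc}}$, and then secure the uniform integrability needed to upgrade Cheeger--Gromov--Hamilton convergence of the flows to convergence of $\mathcal{W}$ and $\mathcal{N}$. Conceptually, the one point that is not purely formal is the recognition that Lemma~\ref{inj-radius} is precisely what keeps the blow-down from collapsing, so that $(M_\infty,g_\infty)$ is a genuine smooth manifold and $\mathcal{W}^{g_\infty}(1)$ is a finite number; it is this finiteness, rather than any a priori noncollapsing, that confines $\mu_\infty$ to the interval $(-\infty,0)$.
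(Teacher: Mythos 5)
Your proposal is correct and follows essentially the same route as the paper: in both, the key point is that the Type I injectivity radius bound of Lemma \ref{inj-radius} substitutes for the $\kappa$-noncollapsing hypothesis in the standard blow-down argument of Perelman and Naber. The paper merely packages this bound as a Type-I-scale volume lower bound and then quotes the proofs of Cao--Zhang \cite{CZ} and Xu \cite{X} for the convergence of the conjugate heat kernels and of the entropies, which are exactly the details you sketch (and correctly flag as the main technical work).
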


\begin{proof}
The proof of this proposition can be modified from, for instance, \cite{CZ} or \cite{X}. First of all, the Type I condition implies that there exists a positive number $C$, such that
\begin{eqnarray}\label{tp1}
|Rm_{g(t)}|\leq\frac{C}{|t|}
\end{eqnarray} 
for all $t\in(-\infty,0)$. We then observe that for all $(x,t)\in M\times (-\infty,0)$, it holds that
\begin{eqnarray}\label{vol}
\operatorname{Vol}_{g(t)}\big(B_{g(t)}(x,\sqrt{|t|})\big)\geq c|t|^{\frac{n}{2}},
\end{eqnarray}
where $c>0$ is a constant. Suppose this is not true, then one may find a sequence of counterexamples $\{(x_i,t_i)\}_{i=1}^\infty\subset M\times(-\infty,0)$, such that the scaled Ricci flows $(M,g_i(t))_{t\in(-\infty,0)}$, where $g_i(t):=|t_i|^{-1}g_i(t|t_i|)$, all satisfy (\ref{tp1}), but
\begin{eqnarray}\label{vol2}
\operatorname{Vol}_{g_i(-1)}\big(B_{g_i(-1)}(x_i,1)\big)\rightarrow 0.
\end{eqnarray}
By Lemma \ref{inj-radius}, we have 
\begin{eqnarray*}
\operatorname{inj}(g_i(-1),x_i)\geq c>0.
\end{eqnarray*}
Hence, by \cite{Ha3}, the sequence of Ricci flows $\displaystyle \big\{(M,g_i(t),(x_i,-1))_{t\in(-\infty,0)}\big\}_{i=1}^\infty$ converges, possibly after passing to a subsequence, to a smooth ancient solution $(M_\infty,g_\infty(t),(x_\infty,-1))_{t\in(-\infty,0)}$. In particular, we have
\begin{eqnarray*}
\operatorname{Vol}_{g_\infty(-1)}\big(B_{g_\infty(-1)}(x_\infty,1)\big)>0,
\end{eqnarray*}
and this contradicts (\ref{vol2}).

One may then follow the proofs in \cite{CZ} or \cite{X}  to conclude this Proposition. Obviously, the noncollapsing condition in these proofs can be replaced by (\ref{vol}).
 
\end{proof}

\section{The Nash Entropy and Noncollapsing}

It turns out that from Proposition \ref{Asymptotic_Shrinker} it is sufficient to conclude that the ancient solution is noncollapsed on all scales everywhere. This follows from an observation made in \cite{MZ}. Let $(M,g(t))_{t\in(-\infty,w)}$, where $0<w\leq\infty$, be an ancient solution as describe in Theorem \ref{MainTheorem1} or Theorem \ref{MainTheorem2}. One may generally regard $(-\infty,w)$ as the maximum existing interval of $g(t)$, in which case $t=w$ is the singular time (whether it is infinity or not). The following lemma says $g(t)$ has bounded geometry as long as it is regular.

\begin{Lemma}
For all $t\in(-\infty,w)$, it holds that $$\sup_M\big|Rm_{g(t)}\big|<\infty\text{ and }\displaystyle\inf_{x\in M}\operatorname{Vol}_{g(t)}\big(B_{g(t)}(x,1)\big)>0.$$
\end{Lemma}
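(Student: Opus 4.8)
The plan is to split into the closed case and the noncompact case, and within the noncompact case to split further according to the sign of $t$. In the closed case both assertions are immediate from compactness: $|Rm_{g(t)}|$ is a finite continuous function on the compact manifold $M$, hence bounded; and $x\mapsto\operatorname{Vol}_{g(t)}(B_{g(t)}(x,1))$ is a strictly positive continuous function on $M$, hence bounded below by a positive constant. So the substance lies in the noncompact case, the setting of Theorem \ref{MainTheorem1}, where $M$ is diffeomorphic to $\mathbb{R}^n$ by Proposition \ref{Gromoll-Meyer}.

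For $t\in(-\infty,0)$ I would simply invoke the two estimates already extracted in the proof of Proposition \ref{Asymptotic_Shrinker}. The curvature bound $(\ref{tp1})$ gives $\sup_M|Rm_{g(t)}|\leq C/|t|<\infty$ directly. For the volume, $(\ref{vol})$ handles scale $\sqrt{|t|}$, and one passes to scale $1$ by Bishop--Gromov: if $|t|\leq 1$ then $B_{g(t)}(x,1)\supseteq B_{g(t)}(x,\sqrt{|t|})$, so $\operatorname{Vol}_{g(t)}(B_{g(t)}(x,1))\geq c|t|^{n/2}$; if $|t|>1$ then $(\ref{tp1})$ gives $\operatorname{Ric}_{g(t)}\geq-(n-1)C/|t|$, and Bishop--Gromov comparison of the radius-$1$ and radius-$\sqrt{|t|}$ balls yields $\operatorname{Vol}_{g(t)}(B_{g(t)}(x,1))\geq c|t|^{n/2}\cdot V_\kappa(1)/V_\kappa(\sqrt{|t|})$, where $V_\kappa(r)$ denotes the volume of a radius-$r$ ball in the model space of constant curvature $\kappa=-(n-1)C/|t|$. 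In both subcases the lower bound is positive and independent of $x$.

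For $t\in[0,w)$ there is no direct curvature or noncollapsing control at hand, so I would relay the information from negative times by a time-translation. Fix any $\sigma\in(t,w)$ and set $g_\sigma(s):=g(s+\sigma)$, a Ricci flow on $(-\infty,w-\sigma)$ with $w-\sigma>0$. It is complete, noncompact, has positive sectional curvature at every time (its time slices are time slices of $g$), and it is again Type I, since $\limsup_{s\to-\infty}|s|\,|Rm_{g_\sigma(s)}|=\limsup_{u\to-\infty}|u-\sigma|\,|Rm_{g(u)}|=\limsup_{u\to-\infty}|u|\,|Rm_{g(u)}|<\infty$ because $|u-\sigma|/|u|\to 1$. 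Hence $g_\sigma$ satisfies the hypotheses of Theorem \ref{MainTheorem1}, so running the previous paragraph for $g_\sigma$ at its (negative) time $s=t-\sigma$ gives $\sup_M|Rm_{g_\sigma(t-\sigma)}|<\infty$ and $\inf_x\operatorname{Vol}_{g_\sigma(t-\sigma)}(B_{g_\sigma(t-\sigma)}(x,1))>0$; since $g_\sigma(t-\sigma)=g(t)$, this is exactly the claim for $g$ at time $t$.

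Once $(\ref{tp1})$ and $(\ref{vol})$ are granted the argument is essentially bookkeeping, and I expect the only genuinely non-obvious part to be the range $t\geq 0$: there the Type I hypothesis says nothing by itself, and one must check with some care that it, together with the derived estimates $(\ref{tp1})$--$(\ref{vol})$, survives the time-shift (it does, since $g_\sigma$ is again a complete noncompact Type I ancient solution with positive sectional curvature). The other point deserving attention is the Bishop--Gromov step for $|t|\neq 1$, which relies on the time-dependent Ricci lower bound furnished by $(\ref{tp1})$.
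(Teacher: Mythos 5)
Your argument is correct, but it takes a genuinely different route from the paper's. The paper's own proof is essentially one line per claim: the slice-wise curvature bound is read off from the meaning of $(-\infty,w)$ being the maximal existence interval (for $t<w$ the curvature is bounded, whether $w$ is finite or not), and the unit-ball volume bound comes from a volume distortion estimate --- fix a sufficiently negative time $t_0<\min(t,0)$, where Lemma \ref{inj-radius} (equivalently (\ref{vol})) gives a uniform lower bound on unit-ball volumes, and use the curvature bound on $M\times[t_0,t]$ to make $g(t_0)$ and $g(t)$ uniformly equivalent, transporting that bound to time $t$; this covers all $t<w$ and both the closed and noncompact cases at once. You instead argue slice by slice: for $t<0$ you quote (\ref{tp1}) and (\ref{vol}) and pass from scale $\sqrt{|t|}$ to scale $1$ by Bishop--Gromov, and for $t\in[0,w)$ you reduce to negative time by a time translation, checking that the shifted flow again satisfies the hypotheses of Theorem \ref{MainTheorem1} so that the estimates in the proof of Proposition \ref{Asymptotic_Shrinker} can be rerun for it. This is legitimate (Proposition \ref{Asymptotic_Shrinker} precedes the Lemma, so there is no circularity), and it buys independence from the distortion estimate, at the cost of comparison geometry and of re-deriving (\ref{tp1})--(\ref{vol}) for the auxiliary shifted flows. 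Two small remarks: since the sectional curvature is positive you may run Bishop--Gromov with nonnegative Ricci, so $r\mapsto r^{-n}\operatorname{Vol}_{g(t)}\big(B_{g(t)}(x,r)\big)$ is nonincreasing and the case $|t|>1$ gives a uniform bound directly, with no need for the model space of curvature $-(n-1)C/|t|$; and the curvature half of the statement is not really gained from the Type I hypothesis --- (\ref{tp1}) near $t=0$, as well as its analogue for your shifted flows, already presupposes that the curvature is bounded on compact subintervals of $(-\infty,w)$, which is exactly the definitional point the paper's proof invokes, so your detour does not make that half more self-contained than the paper's one-liner.
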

\begin{proof}
If $w<\infty$, then by the definition of finite singular time, this is the first instance at which $g(t)$ has unbounded curvature. If $w=\infty$, then this means that $g(t)$ has bounded curvature for all $t$. The volume lower bound for unit balls follows from a straightforward volume distortion estimate.
\end{proof}

From this time-wise geometry bound, we may conclude the following proposition; this is a combination of Proposition \ref{Asymptotic_Shrinker} above and Proposition 4.6 in \cite{MZ}, where the second author together with Zilu Ma proved (as a consequence of Corollary 5.11 in \cite{Ba}) that on an ancient solution with bounded geometry on each time-slice, Perelman's entropies and the Nash entropies based at all points converge to the same number as the time approaches negative infinity.

\begin{Proposition}\label{globalentropy}
Let $(M,g(t))_{t\in(-\infty,w)}$, where $0<w\leq\infty$, be an ancient solution as describe in Theorem \ref{MainTheorem1} or Theorem \ref{MainTheorem2}. Then for all $(x,t)\in M\times(-\infty,w)$, the following holds
\begin{eqnarray*}
\lim_{\tau\rightarrow\infty}\mathcal{W}_{x,t}(\tau)=\lim_{\tau\rightarrow\infty}\mathcal{N}_{x,t}(\tau)=\mu_\infty,
\end{eqnarray*}
where $\mu_\infty$ is the entropy of any one of asymptotic shrinkers based at any point (for their entropies are all equal). In particular, we have
\begin{eqnarray}
\mathcal{N}_{x,t}(\tau)\geq \mu_\infty
\end{eqnarray}
for all  $(x,t)\in M\times(-\infty,w)$ and for all $\tau>0$.
\end{Proposition}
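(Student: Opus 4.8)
The plan is to combine two facts that are already in hand. Proposition \ref{Asymptotic_Shrinker}, applied with $(x_0,t_0)$ taken to be an arbitrary spacetime point $(x,t)$, produces an asymptotic shrinker based at $(x,t)$ and shows that the monotone quantities $\mathcal{W}_{x,t}(\tau)$ and $\mathcal{N}_{x,t}(\tau)$ share a common limit as $\tau\to\infty$, namely the (negative, time-independent) entropy of that shrinker; a priori this number depends on the base point, so provisionally denote it $\mu_\infty(x,t)$. What remains is to see that $\mu_\infty(x,t)$ is in fact independent of $(x,t)$, and then the displayed chain of equalities follows by identifying this common value with $\mu_\infty$.

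This is where the Lemma immediately preceding the statement enters: it says precisely that $g(t)$ has bounded geometry on each time-slice, i.e.\ bounded curvature together with a uniform positive lower bound on the volumes of unit balls. This is exactly the hypothesis of Proposition 4.6 in \cite{MZ} (itself a consequence of Corollary 5.11 in \cite{Ba}), which asserts that on such an ancient solution the Nash entropies, and likewise Perelman's entropies, based at all points converge to one and the same number as $\tau\to\infty$. Hence $\mu_\infty(x,t)$ does not depend on $(x,t)$; we call this common value $\mu_\infty$, and by the previous paragraph it equals the entropy of every asymptotic shrinker based at every point. Since each such shrinker entropy is negative by Proposition \ref{Asymptotic_Shrinker}, so is $\mu_\infty$.

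Finally, the inequality $\mathcal{N}_{x,t}(\tau)\geq\mu_\infty$ for all $\tau>0$ is immediate from monotonicity: $\mathcal{N}_{x,t}(\tau)$ is nonincreasing in $\tau$ (equivalently nondecreasing in $t$), so for each fixed $\tau>0$ one has $\mathcal{N}_{x,t}(\tau)\geq\lim_{\tau'\to\infty}\mathcal{N}_{x,t}(\tau')=\mu_\infty$. The only substantive input is the base-point independence used in the second paragraph, and the analytic work there has been entirely offloaded to \cite{MZ} and \cite{Ba}; the task that falls to us is merely to check that the bounded-geometry-per-slice hypothesis is satisfied, which the preceding Lemma supplies. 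I do not expect any new estimate to be required; the main point to be careful about is simply that the hypotheses of \cite[Proposition~4.6]{MZ} are verified on the nose.
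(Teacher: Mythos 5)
Your proposal is correct and follows essentially the same route as the paper, which proves this proposition precisely by combining Proposition \ref{Asymptotic_Shrinker} (applied at an arbitrary base point) with the preceding bounded-geometry-per-slice Lemma so that Proposition 4.6 of \cite{MZ} (via Corollary 5.11 of \cite{Ba}) yields base-point independence of the limit, the final inequality then being the monotonicity of $\mathcal{N}_{x,t}(\tau)$ in $\tau$, exactly as you argue.
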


Though in the original proof of Perelman \cite{P}, he uses the bound of the $\mu$ functional to show the no local noncollapsing theorem, yet the second author showed that the boundedness of the Nash entropy could also be used to prove the noncollapsedness at its base point; this is the following Proposition.

\begin{Proposition}[Theorem 6.1 in \cite{Ba}]\label{noncollapsing}
Let $(M,g(t))$ be a Ricci flow and $(x,t)$ a point in the space time. Let $r$ be a positive scale such that $[t-r^2,t]$ is in the existing interval and $R\leq r^{-2}$ on $\displaystyle B_{g(t)}(x,r)$. Then, it holds that
\begin{eqnarray*}
\operatorname{Vol}_{g(t)}\big(B_{g(t)}(x,r)\big)\geq c\exp\big(\mathcal{N}_{x,t}(r^2)\big)r^n.
\end{eqnarray*}
Here $c$ is a positive constant depending only on the dimension.
\end{Proposition}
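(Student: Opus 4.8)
The plan is to adapt Perelman's proof of the no-local-collapsing theorem \cite{P}, the one substantive change being that, in place of the $\mu$-functional of a fixed earlier time slice---which need not even be finite when $M$ is noncompact---one uses the pointed Nash entropy $\mathcal{N}_{x,t}$, which is always well defined and, being monotone in $\tau$, is bounded below as soon as $\mathcal{N}_{x,t}(r^2)$ is. By parabolic rescaling I may assume $r=1$ and $t=0$, so that $g(\cdot)$ exists on $[-1,0]$, $R\leq 1$ on $B:=B_{g(0)}(x,1)$, and the claim reduces to $\operatorname{Vol}_{g(0)}(B)\geq c(n)\exp\big(\mathcal{N}_{x,0}(1)\big)$.

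I would work with the conjugate heat kernel $u(\cdot,s)=(4\pi(-s))^{-\frac n2}e^{-f(\cdot,s)}$ based at $(x,0)$, $s\in[-1,0)$. Unwinding the definition, for every $\tau\in(0,1]$ one has
\begin{eqnarray*}
-\int_M u(\cdot,-\tau)\,\log\big((4\pi\tau)^{\frac n2}u(\cdot,-\tau)\big)\,dg_{-\tau}\ =\ \mathcal{N}_{x,0}(\tau)+\frac n2,
\end{eqnarray*}
so a lower bound on $\mathcal{N}_{x,0}$ is exactly an upper bound on how spread out the conjugate heat kernel is, while Perelman's monotonicity records how this depends on the scale. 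The technical core is a pair of curvature-free estimates for the conjugate heat kernel on a Ricci flow background, supplied by \cite{Ba}: a Gaussian-type pointwise upper bound whose on-diagonal size is comparable to $(-s)^{-\frac n2}\exp\big(-\mathcal{N}_{x,0}(-s)\big)$, together with the $H_n$-concentration, which confines the mass of $u(\cdot,s)$, in the metric $g(s)$, to a bounded number of the scales $\sqrt{-s}$ around a point that tends to $x$ as $s\nearrow 0$.

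Granting these, the remaining steps are, in essence: the pointwise bound caps the height of $u(\cdot,s)$, so that $\int_M u(\cdot,s)\,dg_s=1$ forces the bulk of the mass onto a set of $g(s)$-volume comparable to $(-s)^{\frac n2}\exp\big(\mathcal{N}_{x,0}(-s)\big)$; the $H_n$-concentration places that set inside a $g(s)$-ball about a point near $x$ of radius a bounded multiple of $\sqrt{-s}$; a metric comparison and a volume-distortion estimate over a short time interval at the end of $[-1,0]$, both legitimate because the flow is smooth with $R\leq 1$ on $B$, transfer the resulting volume lower bound to $\operatorname{Vol}_{g(0)}(B)$; and the monotonicity $\mathcal{N}_{x,0}(\tau)\geq\mathcal{N}_{x,0}(1)$ brings everything to the scale $\tau=1$, producing $\operatorname{Vol}_{g(0)}(B)\geq c(n)\exp\big(\mathcal{N}_{x,0}(1)\big)$.

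The main obstacle is precisely the input I have allowed myself, together with its careful use: establishing the Gaussian and $H_n$-concentration bounds for the conjugate heat kernel with purely dimensional constants, and arranging the transfer to the $t=0$ slice so that no non-dimensional quantity---such as the rate at which the heat kernel concentrates, which a priori depends on the curvature at negative times, where only the single-slice bound $R\leq 1$ on $B$ is available---leaks into the final estimate. This is exactly what is carried out in \cite{Ba}, so I would invoke it rather than reprove it; the elementary reductions---the rescaling, the volume distortion, and the use of monotonicity---are routine.
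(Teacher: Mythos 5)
Your proposal is essentially the paper's treatment: the paper does not reprove this statement but simply cites it as Theorem 6.1 of \cite{Ba}, exactly as you ultimately do, and your sketch of the underlying mechanism (conjugate heat kernel upper bounds of on-diagonal size $\exp(-\mathcal{N})$, $H_n$-concentration, volume distortion near the final slice, and monotonicity of $\mathcal{N}$) is a faithful outline of Bamler's argument. The one point the paper adds that you pass over is the remark that \cite{Ba} states the theorem for closed manifolds, so for the noncompact case one must check that the proof remains valid for flows with bounded geometry on each time-slice (invoking Theorem 4.4 of \cite{MZ}), which is the setting actually needed for Theorem \ref{MainTheorem1}.
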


\emph{Remark:} Though Bamler \cite{Ba} proved the above result for Ricci flows on closed manifolds, yet one may check the proof of Theorem 6.1 in \cite{Ba} and easily verify its validity for Ricci flows with bounded geometry on each time-slice; one may need to apply Theorem 4.4 of \cite{MZ} in this verification. Fortunately, all the Ricci flows we work with in this paper satisfy this condition. On the other hand, the second author proved that bounded Nash entropy implies weak noncollapsing, and this proof does not need bounded geometry on each time-slice; see Proposition 3.3 in \cite{Z1}.

\bigskip

\begin{proof}[Proof of Theorem \ref{MainTheorem1} and Theorem \ref{MainTheorem2} ]
Let $(M,g(t))_{t\in(-\infty,w)}$ be an ancient solution as described in either Theorem \ref{MainTheorem1} or Theorem \ref{MainTheorem2}. Let $(x,t)\in M\times(-\infty,w)$ be an arbitrary space-time point, and $r$ any scale that satisfies
\begin{eqnarray*}
R\leq r^{-2} \text{ on } B_{g(t)}(x,r).
\end{eqnarray*}

Since, by Proposition \ref{globalentropy}, we have
\begin{eqnarray*}
\mathcal{N}_{x,t}(r^2)\geq \mu_\infty\in(-\infty,0),
\end{eqnarray*}
where $\mu_\infty$ is the entropy of one of the asymptotic shrinkers, it then follows from Proposition \ref{noncollapsing} that
\begin{eqnarray*}
\operatorname{Vol}_{g(t)}\big(B_{g(t)}(x,r)\big)\geq ce^{\mu_\infty} r^n;
\end{eqnarray*}
this finishes the proof.

\end{proof}

\section{Applications}

In this section, we prove all the corollaries proposed in the introduction section. 

\begin{proof}[Proof of Corollary \ref{3d}]
Let $(M^3,g(t))$ be a three-dimensional noncompact Type I ancient solution. By Chen \cite{C}, $g(t)$ has nonnegative sectional curvature everywhere. If its sectional curvature is strictly positive, then, by Theorem \ref{MainTheorem1}, it is also noncollapsed. It follows from \cite{Z2} (or \cite{H}) that such ancient solution does not exist.

If $g(t)$ ever attains zero sectional curvature somewhere, then by the strong maximum principle of Hamilton \cite{Ha2},  $(M^3,g(t))$ splits locally and hence its universal cover must be the standard shrinking cylinder $\mathbb{S}^2\times\mathbb{R}$. Furthermore, the only noncompact quotients of $\mathbb{S}^2\times\mathbb{R}$ are the $\mathbb{Z}_2$ quotients. The reason is that the projection of a group action on the $\mathbb{R}$ factor can only be either the reflection or the identity---if it is ever a translation, then this action will generate an infinity group action on $\mathbb{S}^2\times\mathbb{R}$, and the quotient space must be compact. This finishes the proof of the corollary.
\end{proof}

\begin{proof}[Proof of Corollary \ref{noncompact}]
We argue by contradiction. Assume one of the asymptotic shrinkers is the standard cylinder $\mathbb{S}^{n-1}\times\mathbb{R}$, then all the results obtained in section 3 of \cite{LZ} can be applied to this ancient solution. In particular, it satisfies a canonical neighborhood theorem and hence always has a non-neck-like region at each time (c.f. Theorem 1.3 in \cite{LZ}), it always splits as $\mathbb{S}^{n-1}\times\mathbb{R}$ at space infinity (c.f. Proposition 3.9 in \cite{LZ}), it satisfies the neck stability theorem of Kleiner-Lott \cite{KL} (c.f. Theorem 3.11 in \cite{LZ}), and all such ancient solutions form a compact space (c.f. Theorem 1.2 in \cite{LZ}). Knowing all these facts, one may follow the arguments in \cite{Z2} line by line to conclude that such an ancient solution does not exist.
\end{proof}

\begin{proof}[Proof of Corollary \ref{compact}]
The $\operatorname{PIC}-2$ condition implies the positive sectional curvature condition. Hence, according to Theorem \ref{MainTheorem2}, such ancient solution must be $\kappa$-noncollapsed on all scales for some $\kappa>0$. The conclusion then follows from Corollary 0.4 in \cite{Ni2}.
\end{proof}

\noindent School of Mathematics and Statistics \& Hubei Key Laboratory of Mathematical Sciences, Central China Normal University, Wuhan, 430079, P.R.China
\\ E-mail address: \verb"chengliang@mail.ccnu.edu.cn"
\\

\noindent School of Mathematics, University of Minnesota, Twin Cities, MN, 55414, USA
\\ E-mail address: \verb"zhan7298@umn.edu"

\end{document}